\newtheorem{lemma}{Lemma}
\newtheorem{theorem}{Theorem}
\newtheorem{definition}{Definition}
\newtheorem{corollary}{Corollary}
\newtheorem{proposition }{Proposition}
\newtheorem{rem}{Remark}
\newcommand{\eps}{\varepsilon}
\newcommand{\vphi}{\varphi}
\begin{document}

\title{Сonvergence rate for  weighted polynomial approximation  on the real line}

\author{
Anna Kononova
\footnotemark[0] 
\thanks{This work is supported by Russian Science Foundation grant № 20-61-46016} }

\maketitle

	\abstract {
In this note we study a quantitative version of Bernstein's approximation problem when the  polynomials are dense  in weighted spaces on the real line completing a result of S.~N.~Mergelyan (1960). 
We estimate in the logarithmic scale the error of the weighted polynomial approximation  of the Cauchy kernel.

%For the Cauchy kernel, the decay rate for the error of the best polynomial approximation   is estimated in the logarithmic scale.	
	}
	
\section{Introduction}

The polynomial approximation problem in weighted spaces of functions on the real line is a classical subject in analysis since  the beginning of the 20th century. 

Let $W:\mathbb R\to [1,\infty]$ be an upper semicontinuous  function  on the real line. We denote by $\mathcal C_W$ the linear space of continuous functions $f:\mathbb R\to\mathbb C$ with finite semi-norm
\begin{equation*}
\displaystyle\|f\|_{\infty,W}:=\sup_{t\in \mathbb R}\left|\frac{f(t)}{W(t)}\right|<+\infty.
\end{equation*}
We can think of $\mathcal C_W$ as of a normed space (passing, as usual, to the quotient space under the standard equivalence relation $f\underset{W}{\sim} g\;\;\Leftrightarrow\;\;\|f-g\|_{\infty,W}=0$). 
Throughout the paper we always assume that $W$ grows to infinity faster than any polynomial:
\begin{equation}\label{Pol}
\displaystyle\lim_{|t|\to\infty}\frac{t^n}{W(t)}= 0\;\;\;\forall n\in\mathbb N.
\end{equation}
This condition  ensures that the space $\mathcal C_W$ contains all polynomials.

In 1924, S.~N.~Bernstein \cite{Bern} posed the following question: {\it for which functions $W$ satisfying (\ref{Pol}) the polynomials are dense in $\mathcal C_W$? 
}The Bernstein weighted approximation problem  has been  persistently attracting attention of analysts for almost a century.  

T.~Hall   \cite{Hall} proved in 1938  that if the  polynomials are dense  in $\mathcal C_W$, then necessarily
\begin{equation}
\label{Hall}\int_{-\infty}^\infty \frac{\log W(t)}{1+t^2}{\rm d}t=\infty.
\end{equation}
This condition fails to be sufficient for the density of polynomials \cite[Section VI.H.3]{Koosis}.

There are different approaches to Bernstein's problem. We mention here two  classical  papers by N.~I.~Akhiezer \cite{A} and S.~N.~Mergelyan \cite{M},  both  published in 1956. 
Let us recall  Mergelyan's solution to Bernstein's problem.
He  introduced the function
$$\Omega_W(z):=\sup\left\{|P(z)|: P\in\mathcal P, |P(t)|\le \sqrt{1+t^2}W(t),\; t\in \mathbb R\right\},\;\;z\in\mathbb C\setminus\mathbb R,
$$
where $\mathcal P$ is the space of  the polynomials. Mergelyan proved that the density of the polynomials  in $\mathcal C_W$ is equivalent to each of the following conditions:
\begin{itemize}
    \item $\Omega_W(i)=\infty,$
\item$\displaystyle \int_{-\infty}^\infty \frac{\log\Omega_W(t)}{1+t^2}{\rm d}t=\infty.$
\end{itemize}

If the function $W$ is such that the polynomials are dense in the space $\mathcal C_W$, it is natural to ask  about the approximation rate by polynomials. More precisely, let $\mathcal P_n$ denote the space of the polynomials of degree less than or equal to $n$. For a function $f \in \mathcal C_W$ and for positive $n$, we can define the error  of approximation by polynomials of  degree $n$ by

$$ \mathcal E_n(f) = \inf_{P \in \mathcal P_n} \| f - P\|_{\infty, W}.
$$
%Clearly, the sequence  $\{\mathcal E_n(f)\}$   decreases to $0$.
The asymptotics of the sequence $\{\mathcal E_n(f)\}$  for various functions $f$ were studied by numerous authors, and we refer the reader to the survey papers of D.~Lubinsky \cite{L} and of  H.~N.~Mhaskar \cite{Mha} on this subject.

In this paper, we concentrate on the case when the function $f$ to be approximated is fixed and equals the  Cauchy kernel $\mathcal K(x):= ({x-i})^{-1}$. For a particular class of functions $f$, the values $\mathcal E_n(f)$  were estimated by N.~I.~Akhiezer \cite{A}; G.~Wahde  \cite{Wahde}  found  estimates of such kind in the  $L^2$ norm and  used them to deal with the uniform weighted approximation problem. M.~M.~Dzhrbashyan  \cite{Dzr2}  studied the  best approximation  error of the Cauchy kernel by rational functions.

In 1960, Mergelyan \cite{Merg}
found an upper bound  for  $\mathcal E_n (\mathcal  K)$ for quite a wide class of functions $W$.
The main goal of the present paper is   to obtain  matching upper and lower bounds for $\mathcal E_n(\mathcal K)$ in the logarithmic scale. An analogous problem in $L^p$ norm is also considered.
The proofs use the ideas  from the paper \cite {BKS} by A. Borichev, M. Sodin, and the author.

\subsubsection*{Acknowledgements}
The author is  grateful to A.~Borichev for attracting attention to this problem and for constructive suggestions, to   E.~Abakumov and M.~Sodin for their helpful remarks and recommendations.

\section{Main result}
\begin{definition}
Let  the function $W$ be of the form 
\begin{equation*} \label{W}W(x)=\exp(\vphi(|x|)),\;\;x\in\mathbb R,
\end{equation*}
 where $\vphi$ is a positive continuous   function strictly increasing on $\mathbb R_+$.  
 We will say that a function $W$ is {\it a weight} if it satisfies  conditions (\ref{Pol}) and (\ref{Hall}). 
 \end{definition}

 Given $n\ge \vphi(0)$, define $$A_n:=\vphi^{-1}(n).$$
 
 Given 	$\alpha\in\mathbb R$ we  introduce the perturbed weight 
 $$ W_\alpha(x):={W(x)}{(x^2+1)^{\alpha/2}},$$
 and the corresponding function $ \vphi_\alpha:=\log W_\alpha$.

 We deal with the space  $\mathcal C_W$  equipped with the norm $\|f\|_{\infty,W}$ defined in the introduction, and  with the weighted $L^p$ spaces defined as follows:
  $$L^p_W:=\left\{f:\mathbb R\to\mathbb C: \|f\|_{p,W}:=
 \displaystyle\left(\int_{\mathbb R} \left|\frac{f(x)}{W(x)}\right|^p{\rm d} x\right)^{1/p} <\infty
 \right\},$$
 where $p\in [1,\infty).$
For $1\le p\le\infty$ and $n\in \mathbb N,$ define
$$E_n({p,W}):=\inf_{Q\in\mathcal P_{n-1}}\left\{\left\|\frac 1{x-i}-Q(x)\right\|_{p,W}\right\}.
$$

%\begin{rem}
%In the following we will omit the arguments  in brackets in the notation of $E_n(p,W)$ and subindices in the notation of the norm whenever it is irrelevant or  clear from the context. 
%\end{rem}

  % and is such that the polynomials are dense in $\mathcal C_W$ with respect to the norm $\|\cdot\|_{\infty,W}$. It is well-known \cite{Koosis} that in this case the polynomials are also dense with respect to the norm $\|\cdot\|_{p,W}$ for $p\in[1,\infty)$. As was mentioned in the introduction, we also have
%(\ref{Hall}).

%
%We will be dealing with the following norms
% $$\|f\|^2_{\vphi,2}:=\int_{\mathbb R} \left|\frac{f(x)}{W(x)\sqrt{x^2+1}}\right|^2{\rm d} x
%$$
%and
%$$\|f\|_{\vphi,\infty}:=\sup_{x\in\mathbb R}\left\{ \frac{|f(x)|}{W(x)\sqrt{x^2+1}}\right\}.
%$$
%Let  $\mathcal P_{N}:=span\{x^k,\;k=0,\dots,N\}$. Put
%$$E_{\infty}(n):=\inf_{Q\in\mathcal P_{n-1}}\left\{\left\|\frac 1{x-i}-Q(x)\right\|_{\vphi,\infty}\right\}
%$$
%and 
%$$E_{2}(n):=\inf_{Q\in\mathcal P_{n-1}}\left\{\left\|\frac 1{x-i}-Q(x)\right\|_{\vphi,2}\right\}.
%$$
%Note that 
%$$E_\infty(n):=\inf_{P_n(i)=1,P_{n}\in\mathcal P_n}\left\|{P_{n}}\right\|_{\vphi,\infty}$$
%and
%$$E_2(n):=\inf_{P_n(i)=1,P_{n}\in\mathcal P_n}\left\|{P_{n}}\right\|_{\vphi,2}$$

%\begin{rem}
%In the following we will omit the subindexes in the notations of $E(n)$ where it is irrelevant. We also do not indicate the dependence of these numbers on the function $\vphi$ which is supposed to be fixed.
%\end{rem}

The sequence $\left(E_n(p,W)\right)_n$ is nonincreasing; if the polynomials are dense in $L^p_W$ $(\mathcal C_W),$ then this sequence tends to $0$. 
We are interested in  estimating  the growth rate of the sequence $|\log E_n(p,W)|$ as $n\to \infty$ in terms of the function $\vphi$. 

An easy calculation shows that 
$$E_n({p,W})=\inf_{P_n(i)=1,P_{n}\in\mathcal P_n}\left\|{\frac{P_{n}(x)}{\sqrt{x^2+1}}}\right\|_{p,W}=\inf_{P_n(i)=1,P_{n}\in\mathcal P_n}\left\|{{P_{n}(x)}}\right\|_{p,W_{1}}.$$
In particular, for the case $p=2$ our results relate to the asymptotical properties of the Christoffel function outside the real line (we refer to \cite{LL} for more information about the Christoffel function).
On the other hand, in terms of Mergelyan's function $\Omega_W$ we have
$$\lim_{n\to\infty}E_n({\infty, W})=\frac 1 {\Omega_W(i)}.$$

In our note we deal with the following classes of functions.

\begin{definition}
Given a continuous increasing function  $\vphi: \mathbb R_+\to \mathbb R_+$ satisfying  condition (\ref{Hall}),  we say that $\vphi$ is
\begin{itemize}
    \item{\bf normally growing } if  $\vphi(x)/x^{2}$ is decreasing and $\vphi(x)$ is a convex function of $\log x$ on $[A,+\infty)$  for some $A>0$;
     \item {\bf rapidly growing} if $\vphi(x)/x^{1+\eps}$  is increasing on $[A,+\infty)$ for some $\eps>0$ and $A>0$;
     \item {\bf regularly growing} if it is either normally growing or rapidly growing.

\end{itemize}
\end{definition}

\begin{rem}{\rm
There is a nonempty intersection between the classes of rapidly growing and regularly growing functions, e.g., the function $\vphi(x)=x^{3/2}$ satisfies both conditions.
}\end{rem}

\begin{rem}{\rm
Note that the polynomials are dense in $\mathcal C_W$ and in $L^2_W$ provided that $\log W$ is a regularly growing function \cite[Sections VI.D, VI.G]{Koosis}. For normally growing weights, this  can be proved  using the convexity of $\vphi(e^t)$, and for rapidly growing weights this follows from the fact that $W(x)\gtrsim e^{|x|}$ for $|x|$ large enough.
Therefore,  for a regular weight $W$ the sequence $(E_n(p,W))$ tends to $0$. In particular, for all but finitely many $n\in\mathbb N$ we have $\log E_n<0$.   }
\end{rem}

We use the following notation: given two  positive (or two negative) sequences $(\alpha_n)$ and $(\beta_n)$   we will write
\begin{itemize}
\item $\alpha_n\lesssim \beta_n $ if for some  constant $C>0$ one has $\alpha_n\le C\cdot\beta_n$ for $n\in\mathbb N$ and
\item $\alpha_n\simeq \beta_n$ if both $\alpha_n\lesssim \beta_n$ and $\beta_n\lesssim \alpha_n$ are true.
\end{itemize}

Analogous notation is  used  for functions.

%\begin{definition}
%Weights $W_1$ and $W_2$ are called equivalent if $\log W_1\simeq \log W_2$.
%\end{definition}
%\begin{rem}
%Note, that these two classes of regular functions have a nonempty intersection.
%\end{rem}

The main result of our paper is
\begin {theorem}\label{main}
	Suppose that $\vphi$ is a regularly growing function. Given  $p\in [1,\infty]$, we have
	$$\displaystyle\log E_n(p,W)\simeq-  \int_{0}^{1} \min\left(\vphi		\left(\frac1x\right),\ n\right){\rm d }x.$$	
	
\end{theorem}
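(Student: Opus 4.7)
Using the reformulation $E_n(p,W)=\inf\{\|P/(x-i)\|_{p,W}:\deg P\le n,\ P(i)=1\}$ from the introduction, together with the change of variables $t=1/x$, I would rewrite the target quantity as
\[
I_n:=\int_0^1\min\!\bigl(\vphi(1/x),n\bigr)\,dx\ =\ \frac{n}{A_n}+\int_1^{A_n}\frac{\vphi(t)}{t^2}\,dt,
\]
which splits naturally at the scale $t=A_n$. Because $I_n/\log n\to\infty$ under the standing hypotheses on $\vphi$, polynomial-in-$n$ discrepancies are absorbed by $\simeq$; this lets me reduce the general-$p$ statement to $p=\infty$ via a weighted Nikol'skii-type comparison on the effective support (of length $\simeq A_n$) of the extremal polynomial.

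\textbf{Upper bound.} I would construct $P_n$ explicitly. Take a symmetric positive measure $\nu_n$ of total mass $n$ supported on an interval $[-B_n,B_n]$ with $B_n\simeq A_n$, whose logarithmic potential $\int\log|x-t|\,d\nu_n(t)$ equals $\vphi(|x|)+F_n$ on $[-B_n,B_n]$ (obtained by inverting the finite Hilbert transform of $\vphi'$, tractable because regular growth gives enough smoothness of $\vphi'$). Discretize $\nu_n$ into $n$ real nodes $t_1,\dots,t_n$ and set $P_n(z)=\prod(z-t_k)/\prod(i-t_k)$, so $P_n(i)=1$. On $[-B_n,B_n]$,
\[
\log|P_n(x)|\ =\ \vphi(|x|)+F_n-\tfrac12\int\log(1+t^2)\,d\nu_n(t)+O(\log n),
\]
while outside $[-B_n,B_n]$ regular growth of $\vphi$ (convexity of $\vphi(e^s)$ in the normal case; $\vphi(t)/t^{1+\eps}$ increasing in the rapid case) gives the same inequality with the extra $\tfrac12\log(1+x^2)$ supplied by the right-hand side. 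The constant $\tfrac12\int\log(1+t^2)\,d\nu_n(t)-F_n$ is then $\simeq I_n$ by a direct integration by parts against $d\nu_n$, producing $\|P_n/(x-i)\|_{\infty,W}\le e^{-cI_n}$ and hence the upper bound on $E_n$ for every $p$.

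\textbf{Lower bound.} Let $P\in\mathcal P_n$ satisfy $P(i)=1$ and $\|P/(x-i)\|_{\infty,W}\le M$. The Poisson--Jensen formula applied to $\log|P(z)/(z+i)^n|$ in the upper half-plane gives $\pi^{-1}\int_{\mathbb R}\log|P(t)|(1+t^2)^{-1}\,dt\ge 0$. Plugging in the naive bound $\log|P(t)|\le\log M+\tfrac12\log(1+t^2)+\vphi(|t|)$ diverges by Hall's condition, so I would split $\mathbb R$ at $|t|=A_n$: on $|t|\le A_n$ use the weight bound, and on $|t|\ge A_n$ use the degree-sensitive Chebyshev bound $|P(t)|\le\|P\|_{L^\infty[-A_n,A_n]}\cdot|T_n(t/A_n)|$ with $\|P\|_{L^\infty[-A_n,A_n]}\le 2MA_n e^n$. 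Elementary computations show that the outer integral contributes $\lesssim n/A_n$ and the inner integral contributes $\lesssim\log M+\int_1^{A_n}\vphi(t)/t^2\,dt$ (since $(1+t^2)^{-1}\simeq t^{-2}$ for $|t|\ge 1$); combining yields $\log(1/M)\lesssim I_n$. The $L^p$ case follows by the same Nikol'skii comparison as above.

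\textbf{Main obstacle.} The hard point is the degree-sensitive truncation in the lower bound: Hall's condition rules out any bound of the form $\int\vphi(|t|)(1+t^2)^{-1}dt<\infty$, so the polynomial-degree estimate must take over at precisely the right threshold. The cutoff $A_n=\vphi^{-1}(n)$ is the unique balance, and matching constants between this truncation and the exterior-to-$[-B_n,B_n]$ estimate in the upper-bound construction --- uniformly across the two regularity classes (normal versus rapid growth, which invoke different monotonicity properties of $\vphi$) --- is the technical core, drawing on the ideas of \cite{BKS}.
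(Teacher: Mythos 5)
Your lower bound for $p=\infty$ is essentially the paper's argument (Poisson--Jensen at $i$, split at $|t|=A_n$, weight bound inside, Chebyshev majorant outside), and that part is sound. The genuine gap is in the reduction device you rely on everywhere else. You assert that $I_n/\log n\to\infty$ under the standing hypotheses, so that ``polynomial-in-$n$ discrepancies are absorbed by $\simeq$''. This is false: the theorem covers weights barely beyond Hall's condition, and for instance $\vphi(x)=x/\log(2+x)$ gives $I_n\simeq\log\log n$ (this is the first item of the Corollary). Consequently the $O(\log n)$ error you allow when discretizing the equilibrium measure $\nu_n$ into $n$ zeros is not absorbable: a bound $\log\|P_n\|_{\infty,W_1}-\log|P_n(i)|\le -cI_n+C\log n$ is vacuous when $I_n=o(\log n)$, so your upper bound collapses precisely in the normally growing regime. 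The same defect breaks the ``weighted Nikol'skii comparison on the effective support'': passing between $L^p$ and $L^\infty$ norms costs an additive $\tfrac1p\log(n^2/A_n)$ in the logarithm, which again is not $\lesssim I_n$ in that regime, so your $L^p$ lower bound does not follow from the $p=\infty$ case by this route. (A secondary issue: $\vphi$ is only assumed continuous and increasing, so ``inverting the finite Hilbert transform of $\vphi'$'' is not available without a regularization step, and in the rapidly growing class the assumed monotonicity of $\vphi(x)/x^{1+\eps}$ does not by itself guarantee that the weighted equilibrium measure has interval support.)

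For contrast, the paper avoids all logarithmic losses. For normally growing $\vphi$ the upper bound comes from Mergelyan's construction via Videnskii's lemma: one takes the even partial sum $P_{2n}$ of $F(x)=\sum_k x^{2k}/(2^kM_k)$, which satisfies $P_{2n}\ge F/2$ on $[-A_{2n}/e,A_{2n}/e]$, extracts a polynomial square root $Q_n$, and estimates $\log|Q_n(i)|$ by the Poisson formula --- all errors are multiplicative constants, never $e^{O(\log n)}$. For rapidly growing $\vphi$ the upper bound is simply the scaled Chebyshev polynomial $T_n(x/A_n)$, using $|T_n(i/A_n)|\ge\tfrac12(1+1/A_n)^n$ and $\sup_{x\ge A_n}|T_n(x/A_n)|/W(x)\le(2/e)^n$. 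In the $L^p$ lower bound the paper does use a Markov-type estimate (its Lemma 6), but only inside the tail integral $\int_{A_n}^\infty\frac{dx}{1+x^2}$, so the resulting $\log(n^2/A_n)$ term is divided by $A_n$ and is dominated by $n/A_n\lesssim I_n$; and the $L^p$ upper bound is obtained by comparing with $E_n(\infty,W_{-2})$ and using $W_{-2}\gtrsim W^{1/2}$ together with $I_n(\vphi/2)\simeq I_n(\vphi)$, again with no $\log n$ loss. If you want to salvage the potential-theoretic construction, you would need discretization and comparison estimates with bounded (not logarithmic) error uniformly over the weight classes considered here, which is exactly the technical work your sketch leaves out.
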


The implicit constants may depend on $\vphi$ but are independent of $n$.

%\begin{center}
\begin{figure}[H]
\begin{center}
\begin{tikzpicture}[scale=1.2]

\draw[ domain=0.5:2] plot (\x, {pow(\x, -2)});
\fill[ fill=gray!25, domain=0.67:2] (0.67,0)--(0.67,2.25)--plot (\x, {abs(pow(\x, -2))})--(2,0.25)--(2,0)--cycle;
\fill[ fill=gray!25, domain=-2:-0.67] (0,0)--(0,2.25)--(0.67,2.25)--(0.67,0)--cycle;
\draw[very thick , domain=0.67:2]plot (\x, {pow(\x, -2)}); 
\draw[very thick] (0,2.25)--(0.67,2.25);
[>=stealth]\draw[thick,->] (-0.1,0) -- (2.3,0);  \draw[thick, ->] (0,-0.1) -- (0,4.1);
\draw[dashed] (0.67,0) -- (0.67,2.25);
\draw(-.5, 2.25)node[above right] {$n$}--(2,2.25);
\node at (1,3) {$\varphi (\frac{1}{x})$};
\end{tikzpicture}
\caption{$\min\left\{\displaystyle\vphi\left(\frac 1x\right), n\right\}$.}
\end{center}
\end{figure}

%\end{center}

	\begin{rem}{\rm
The geometrical meaning of our growth classes can be illustrated  by Figure 1. If the function $\vphi$ grows rapidly, then the area of the part  of the  subgraph of the function $\min\left\{\displaystyle\vphi\left(\frac 1x\right), n\right\}$ under the cut-off grows not slower than the area of the remaining part of the subgraph as $n$ tends to infinity, while in the case of the normal growth the part under the cut-off grows not faster than the area of the remaining part of the subgraph.}

\end{rem}

%\begin{rem}
%Since our estimates are rather crude, it allow us to vary the weight function without changing the estimation of $E_n$. In particular, if $W_1\eqsim W_2$  then  $ \log  E_n(p,W_1)\eqsim \log E_n(p,W_2)$. 
%\end{rem}

%%%%%%%%%%%%%%%%%%%%%%%%%%%
%%Тут было Замечание 2 !!!!!!!!
%%%%%%%%%%%%%%%%%%%%%%%%%%%

%\begin{rem}\label{eq}
%Since our estimates of $E_n$ are rather crude, we can  vary somewhat the weight function without changing the result. In particular,
%under the conditions of Theorem \ref{main} the density  of  polynomials  in $\mathcal C_W$ is equivalent to their density in $\mathcal C_{\widetilde W}$, where $\widetilde W(x)\simeq (1+x^2)^{-s}W(x)$, $s\in\mathbb R$. Moreover, since for any fixed $s$,
%$|\vphi(x)-\widetilde \vphi(x)|\lesssim \log (x^2+1),
%$ where $\widetilde \vphi(x)=\log \widetilde W(x),$
%it follows from Theorem 1 that this weight transformation does not affect the growth rate of $E_n$:
%$$\log{E}_n(p,W)\simeq \log E_n(p,\widetilde W).$$
%
%\end{rem}
%\vskip 5pt
%

The following Corollary can be  verified by a simple calculation, which we skip.

\begin{corollary}
Let $p\in[1,\infty].$
Denote $E_n:=E_n(p,W)$. Then we have the following estimates.
\begin{itemize}
 \item If $\vphi(x)=\displaystyle\frac x{\log(2+x)},$
then 
\begin{align*}
 \left|\log{E_n}\right|\simeq  \log \log (n+e).
\end{align*}
 \item If $\vphi(x)= x\log^\nu (2+x), \nu>-1,$
then 
\begin{align*}
 |\log{E_n}|\simeq  \log^{\nu+1} n.
\end{align*}
 \item If $\vphi(x)= x^\nu,\;\;\nu>1,$ then
\begin{align*}
 |\log{E_n}|\simeq   n^{1-1/\nu}.
\end{align*}
 \item If $\vphi(x)= \exp({x^\nu}),\;\;\nu>0,$
then 
\begin{align*}
 |\log{E_n}|\simeq  \frac n{(\log n)^{1/\nu}}. 
\end{align*}
\end{itemize}

\end{corollary}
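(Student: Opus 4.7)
The plan is to apply Theorem~\ref{main} and reduce the Corollary to computing
$$I_n(\vphi) := \int_0^1 \min\left(\vphi\!\left(\frac{1}{x}\right),\,n\right)\,dx$$
explicitly in each of the four families. Substituting $t=1/x$ gives
$$I_n(\vphi) = \int_1^\infty \frac{\min(\vphi(t),\,n)}{t^2}\,dt \simeq \int_1^{A_n}\frac{\vphi(t)}{t^2}\,dt \;+\; \frac{n}{A_n},$$
where $A_n=\vphi^{-1}(n)$; the splitting at $t=A_n$ is legitimate because $\vphi$ is increasing. So for each specific $\vphi$ the task is threefold: determine $A_n$ up to factors that do not affect the final logarithmic-scale asymptotics, evaluate a primitive of $\vphi(t)/t^2$ at $A_n$, and compare it with the boundary term $n/A_n=\vphi(A_n)/A_n$.

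Carrying out these computations is routine. For $\vphi(x)=x/\log(2+x)$ one gets $A_n\simeq n\log n$; the primitive of $1/(t\log t)$ is $\log\log t$, giving $I_n\simeq\log\log n$ with the integral term dominating $n/A_n\simeq 1/\log n$. For $\vphi(x)=x\log^\nu(2+x)$ with $\nu>-1$, $\log A_n\simeq\log n$ and $\int_1^{A_n}(\log^\nu t)/t\,dt\simeq \log^{\nu+1} n$ dominates $n/A_n\simeq \log^\nu n$. For $\vphi(x)=x^\nu$ with $\nu>1$, $A_n=n^{1/\nu}$ and both $\int_1^{A_n}t^{\nu-2}\,dt$ and $n/A_n$ are of exact order $n^{1-1/\nu}$. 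Finally, for $\vphi(x)=\exp(x^\nu)$, $A_n=(\log n)^{1/\nu}$, and the boundary term $n/A_n=n/(\log n)^{1/\nu}$ is now the dominant contribution.

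The only place where the truncation level acts differently is the exponential case, since there the plateau of height $n$ contributes more than the region $t<A_n$; a single integration by parts (writing $e^{t^\nu}dt=d(e^{t^\nu})/(\nu t^{\nu-1})$) yields
$$\int_1^{A_n}\frac{e^{t^\nu}}{t^2}\,dt \;=\; \frac{n}{\nu\,A_n^{\nu+1}} \;+\; O\!\left(\int_1^{A_n}\frac{e^{t^\nu}}{t^{\nu+2}}\,dt\right),$$
which is smaller than $n/A_n$ by a factor of order $\log n$, confirming the claimed asymptotic. No step here is substantive; the genuine work is already contained in Theorem~\ref{main}, and the Corollary is a matter of four one-variable integral asymptotics, with the exponential case being the only mild subtlety.
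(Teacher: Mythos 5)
Your computation is correct and is exactly the ``simple calculation'' that the paper skips: apply Theorem~\ref{main}, substitute $t=1/x$, split the integral at $A_n=\vphi^{-1}(n)$, and work out the four one-variable asymptotics, with your observation that only in the case $\vphi(x)=\exp(x^\nu)$ does the boundary term $n/A_n$ dominate being the right (and only) subtlety. The one point left implicit is that each of the four functions is regularly growing (the first two normally, the last two rapidly) and satisfies (\ref{Hall}), which is needed to invoke Theorem~\ref{main} and is immediate to check.
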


In Section 3 we establish some properties of the Tchebyshev polynomials and bring some results by Videnskii and Mergelyan. In Section 4 we prove Theorem \ref{main}, first in the uniform case and then in the $L^p$ case.

\section{Preliminaries}

\subsection{Some properties of the Tchebyshev polynomials}

In this subsection we have collected some properties of Tchebyshev polynomials that are used in our paper.
We start with the classical Tchebyshev inequality.

Let $T_n$  denote the Tchebyshev polynomial of the first kind of degree $n$:
$$T_n(x):=\frac12\left(\bigl(x+\sqrt{x^2-1}\bigr)^n+\bigl(x-\sqrt{x^2-1}\bigr)^n\right).
$$

\begin{flushleft}
{\bf Tchebyshev inequality. }\label{T}{\it
For any polynomial $P_n$ of degree $n$  such that $|P_n|\le 1$ on $[-1, 1],$ we have
$$|P_n|\le |T_n|\;\; \mbox{on} \;\;\mathbb R\setminus (-1,1).$$}
\end{flushleft}
%\end{lemma}
%
%\begin{proof}
%Let $x_k:=\cos\displaystyle \frac{k\pi}{n}, \;\;k=0,...,n$ and $\Phi_k(x):=\prod_{j=0,j\ne k}^n(x-x_j)$.
%Then
%$$|P_n(x)|=\left|\sum_{k=0}^n\frac {P_n(x_k)}{\Phi_k(x_k)}\Phi_k(x)\right|\le \sum_{k=0}^n\left|\frac {\Phi_k(x)}{\Phi_k(x_k)}\right|,$$
%Note that  $T_n(x_k)=(-1)^k$ and $\Phi_k(x_k)=(-1)^{n-k}|\Phi_k(x_k)|$, so for the Tchebyshev polynomial we obtain
%$$|T_n(x)|=\left|\sum_{k=0}^n\frac {(-1)^k}{\Phi_k(x_k)}\Phi_k(x)\right|=\left|\sum_{k=0}^n\frac {(-1)^k}{(-1)^{n-k}|\Phi_k(x_k)|}\Phi_k(x)\right|.
%$$
%Note also, that for the fixed $x^*$ with $|x^*|>1$ all $\Phi_k(x^*),\;k\in\mathbb N$, have the same sign, so 
%$$|T_n(x)|=\sum_{k=0}^n\left|\frac {\Phi_k(x)}{\Phi_k(x_k)}\right|,
%$$
%completing the proof of the lemma.
%\end{proof}
We   also need two technical results.

\begin{lemma}\label{Tcheb1}
Let $n=2k,\;\;k\in\mathbb N,$ and $a\in\mathbb R$. Then
$$|T_n(i/a)|\ge \frac 12
\left(\frac 1{a}+1\right)^{n}.$$ 
\end{lemma}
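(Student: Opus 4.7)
The plan is to substitute $x=i/a$ directly into the closed-form expression
\[T_n(x) = \tfrac{1}{2}\bigl((x+\sqrt{x^2-1})^n + (x-\sqrt{x^2-1})^n\bigr)\]
recalled just before the lemma. Since $n=2k$ is even, $T_n$ is an even polynomial, so $T_n(i/a)$ depends only on $|a|$, and I may assume $a>0$ without loss of generality (for $a<0$ the right-hand side $\tfrac12(1/a+1)^n$ is no larger than its value at $|a|$).

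A short computation gives $x^2-1 = -(1+a^2)/a^2$, hence one may choose $\sqrt{x^2-1} = i\sqrt{1+a^2}/a$, so that
\[x \pm \sqrt{x^2-1} \;=\; \frac{i}{a}\bigl(1 \pm \sqrt{1+a^2}\bigr).\]
Raising to the even power $n=2k$ turns the prefactor $i^n$ into the real scalar $(-1)^k$ and kills the sign of $1-\sqrt{1+a^2}$, so both summands in $T_n(i/a)$ become real with the same sign:
\[T_n(i/a) \;=\; \frac{(-1)^k}{2 a^n}\bigl((1+\sqrt{1+a^2})^n + (\sqrt{1+a^2}-1)^n\bigr).\]

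To conclude, I would simply discard the positive second summand and apply the trivial inequality $\sqrt{1+a^2}\ge a$ (valid for $a>0$) to obtain
\[|T_n(i/a)| \;\ge\; \frac{1}{2}\left(\frac{1+\sqrt{1+a^2}}{a}\right)^n \;\ge\; \frac{1}{2}\left(\frac{1+a}{a}\right)^n \;=\; \frac{1}{2}\left(\frac{1}{a}+1\right)^n.\]
There is no genuine obstacle here: the only observation worth flagging is that because $x=i/a$ is purely imaginary, so is $\sqrt{x^2-1}$, and this is precisely what forces both summands in the defining formula for $T_n$ to be real and of the same sign once $n$ is even, preventing any cancellation and allowing the one-term lower bound above.
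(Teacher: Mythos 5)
Your proof is correct and takes essentially the same route as the paper's: substitute $x=i/a$ into the closed-form expression, use that $n$ is even so both summands are real and of the same sign, drop one summand, and apply $\sqrt{1+a^2}\ge a$ (which after dividing by $a$ is exactly the paper's $\sqrt{1/a^2+1}\ge 1$). The only differences are the cosmetic factoring out of $a^{-n}$ and your explicit (and correct) reduction of $a<0$ to $a>0$ via evenness, a point the paper passes over silently.
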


\begin{proof}

\begin{align*}
|T_n(i/a)|&=\frac 12\left| \left(\frac ia+\sqrt{-\frac 1{a^2}-1}\right)^n+\left(\frac ia-\sqrt{-\frac 1{a^2}-1}\right)^n\right|
\\&=\frac 12 \left(\frac 1a+\sqrt{\frac 1{a^2}+1}\right)^n+\frac 12\left(\frac 1a+\sqrt{\frac 1{a^2}+1}\right)^{-n}
\ge \frac 12
\left(\frac 1{a}+1\right)^{n}.
\end{align*}

\end{proof}

\begin{lemma}\label{Tcheb2}
Let  $\vphi$ be a rapidly growing function. Then
  \begin{align*}
 \sup_{x\ge A_n}\left\{ \frac{|T_n(x/A_n)|}{W(x)}\right\}
\le \frac{2^n}{e^n}.
\end{align*}
\end{lemma}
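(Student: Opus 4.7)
The plan is to bound $|T_n(y)|$ crudely on $[1,\infty)$ and then exploit rapid growth of $\vphi$ to beat it with $W$. Set $y = x/A_n$; since $x \ge A_n$ we have $y \ge 1$, and from the closed form of $T_n$ one gets the elementary bound
$$|T_n(y)| = \tfrac12\bigl|(y+\sqrt{y^2-1})^n + (y-\sqrt{y^2-1})^n\bigr| \le (2y)^n \quad (y\ge 1).$$
Substituting $y=x/A_n$, the quantity to control becomes
$$\frac{|T_n(x/A_n)|}{W(x)} \le \left(\frac{2x}{A_n}\right)^{n} e^{-\vphi(x)}.$$
Thus the lemma reduces to the scalar inequality
$$\vphi(x) \ge n\Bigl(1 + \log\frac{x}{A_n}\Bigr) \qquad \text{for all } x\ge A_n.$$

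For the second step I would invoke the definition of rapid growth: $\vphi(x)/x^{1+\eps}$ is increasing on $[A,\infty)$. Assuming $n$ is large enough that $A_n\ge A$ (which is what this lemma is used for), monotonicity gives
$$\frac{\vphi(x)}{x^{1+\eps}} \ge \frac{\vphi(A_n)}{A_n^{1+\eps}} = \frac{n}{A_n^{1+\eps}}, \quad \text{i.e.,} \quad \vphi(x) \ge n\Bigl(\frac{x}{A_n}\Bigr)^{1+\eps}.$$
So it suffices to verify $t^{1+\eps} \ge 1 + \log t$ for $t = x/A_n \ge 1$, which follows from chaining $t^{1+\eps}\ge t$ with the classical $t \ge 1 + \log t$ (equality at $t=1$, and the derivative $1-1/t$ of the difference is nonnegative on $[1,\infty)$).

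Putting these together yields $|T_n(x/A_n)|/W(x) \le (2/e)^n$ uniformly on $[A_n,\infty)$, taking the supremum completes the proof. There is essentially no obstacle: the only mildly delicate point is the range-of-validity issue requiring $A_n\ge A$ so that monotonicity of $\vphi(x)/x^{1+\eps}$ can be applied; this is automatic for the asymptotic regime in which the lemma will be invoked, and in any case can be absorbed into a harmless shift of $n$ if one prefers a statement uniform in $n$.
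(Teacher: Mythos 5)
Your proof is correct and follows essentially the same route as the paper: bound $|T_n(x/A_n)|\le (2x/A_n)^n$ for $x\ge A_n$, then use the monotonicity of $\vphi(x)/x^{1+\eps}$ (valid once $A_n\ge A$, i.e.\ for large $n$, exactly as the paper assumes) to get $\vphi(x)\ge n(x/A_n)^{1+\eps}$ and beat $n(1+\log(x/A_n))$. The only difference is cosmetic: you finish with the elementary chain $t^{1+\eps}\ge t\ge 1+\log t$, whereas the paper verifies the same scalar inequality by a critical-point analysis of $g_n(x)=n\log x-\frac{n}{A_n^{1+\eps}}x^{1+\eps}$; your version is slightly more economical but not a different argument.
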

\begin{proof}

We have
\begin{align*}
\left|T_n(x/A_n)\right|=\frac 12\left| \left(\frac x{A_n}+\sqrt{\frac {x^2}{A_n^2}-1}\right)^n+\left(\frac x{A_n}-\sqrt{\frac {x^2}{A_n^2}-1}\right)^n\right|
\\\le  \left(\frac x{A_n}+\sqrt{\frac {x^2}{A_n^2}-1}\right)^n\le  \left(2\frac x{A_n}\right)^n,\qquad x\ge A_n.
 \end{align*}
 Therefore,
 \begin{align}\label{10}
 \sup_{x\ge A_n}\left\{ \frac{|T_n(x/A_n)|}{W(x)}\right\}
\le \frac{2^n}{A_n^n} \sup_{x\ge A_n}\left\{\frac{x^n}{{W(x)}}\right\}.
 \end{align}
Since the function $\vphi$ is rapidly growing, for some $\eps>0$ and for large $n$ we have
 $$\displaystyle \log \frac{x^n}{{W(x)}}=n\log x-\vphi(x)\le n\log x- \frac{\vphi(A_n)}{A_n^{1+\eps}}{x^{1+\eps}}=:g_n(x).$$

%  Put $A:=\displaystyle exp\left(\frac{\vphi(a_n)} {a_n^{1+\eps}}\right)$,
%and find the derivative of the function $g$:

Furthermore, 
$$g_n'(x)=\frac nx-({1+\eps})x^{{\eps}}\frac{n}{A_n^{1+\eps}},
$$
and we conclude that the only critical point $x^*$ of the function $g_n$ satisfies the relation 
$$\left(\frac {x^*}{A_n}\right)^{1+\eps}=\frac 1{1+\eps}$$
and, hence, the function $g_n$ decreases on $[A_n,\infty)$.
Therefore,
$$\log\frac{x^n}{W(x)}\le g_n(x)\le g_n(A_n)=n\log A_n-n$$
for $x\ge A_n.$
Combining this with (\ref{10}), we get
 \begin{align*}
 \sup_{x\ge A_n}\left\{ \frac{|T_n(x/A_n)|}{W(x)}\right\}
\le \frac{2^n}{e^n}.
\end{align*}

\end{proof}

	\subsection{Results of Mergelyan and Videnskii}
		The result of Mergelyan already mentioned in the introduction is based on a lemma by Videnskii (\cite{V}, Lemma \ref{LV} below). For the reader's convenience, we provide here the proofs of the  versions of both results, which suffice for our purposes.

\begin{theorem}[Mergelyan, \cite{Merg}]\label{ML}
Let $\vphi$ be a  function  satisfying (\ref{Hall}) and such that $\vphi(t)$ is a convex function of $\log t$. Then we have 
$$\log E_n(\infty, W)\lesssim -\int_{0}^{l_n} \frac{\vphi(x){\rm d} x}{x^2+1},$$
with $\displaystyle l_n=\frac{A_{2n}}{2e}$.
\end{theorem}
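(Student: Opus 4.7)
By the reformulation stated in Section~2,
$$E_n(\infty, W) = \inf_{P \in \mathcal P_n,\, P(i) = 1} \bigl\| P(x)/\sqrt{x^2+1} \bigr\|_{\infty, W},$$
so it suffices to produce a single polynomial $P \in \mathcal P_n$ with $|P(x)| \le \sqrt{x^2+1}\, W(x)$ on $\mathbb R$ and with $|P(i)|$ as large as possible; then $E_n(\infty,W) \le 1/|P(i)|$. The target reduces to the lower bound
$$\log|P(i)| \gtrsim \int_0^{l_n} \frac{\vphi(x)}{x^2+1}\, dx,$$
after which taking logarithms yields the theorem.

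Such a polynomial will be supplied by Videnskii's lemma (Lemma~\ref{LV} below). Roughly, for a prescribed scale $R > 0$ and prescribed degree, Videnskii's construction places real zeros $x_1,\dots,x_n$ in $[-R, R]$ distributed according to the convex-in-$\log$ structure of $\vphi$; the resulting polynomial is controlled on $[-R, R]$ by the envelope $e^{\vphi(\cdot)}$, and its value at $i$ admits an explicit lower bound in the form of a sum indexed by the $x_k$. The hypothesis that $\vphi(t)$ is a convex function of $\log t$ is essential here: it is exactly what permits this discrete sum to be recognized, up to constants, as a Riemann-sum approximation of the integral $\int_0^{l_n}\vphi(x)/(x^2+1)\, dx$.

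The scale I shall take is $R = l_n = A_{2n}/(2e)$. The two numerical features---the factor $2e$ and the use of $A_{2n}$ in place of $A_n$---arise from the need to extend the control of $P$ outside the construction interval $[-l_n, l_n]$. The classical Tchebyshev inequality bounds the growth of any degree-$n$ polynomial normalized on $[-l_n, l_n]$ by roughly $\bigl(|x|/l_n + \sqrt{(x/l_n)^2-1}\bigr)^n$ on $\mathbb R \setminus (-l_n, l_n)$; the choice $R = A_{2n}/(2e)$ is tuned so that, together with the convexity of $\vphi$ in $\log$ and the defining identity $\vphi(A_{2n}) = 2n$, this Tchebyshev growth is dominated by $W(x)\sqrt{x^2+1}$ throughout the complement of $[-l_n, l_n]$.

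The main obstacle is the balancing act between two competing demands: keeping $|P|$ beneath the envelope $\sqrt{x^2+1}\,W(x)$ globally on $\mathbb R$ while forcing $|P(i)|$ to be as large as the target integral dictates. Videnskii's lemma handles the local construction on $[-l_n, l_n]$ with a nearly optimal value at $i$; Tchebyshev's inequality, combined with the convex-in-log growth of $\vphi$, extends the envelope bound to the rest of the real line; and the convexity hypothesis converts the discrete combinatorial estimate from Videnskii into the clean integral form stated in the theorem.
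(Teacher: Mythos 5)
Your overall reduction is the right one and matches the paper's strategy: exhibit a single admissible polynomial with $|P(x)|\le \sqrt{x^2+1}\,W(x)$ on $\mathbb R$ and $\log|P(i)|\gtrsim \int_0^{l_n}\vphi(x)(x^2+1)^{-1}{\rm d}x$, so that $E_n(\infty,W)\le 1/|P(i)|$. But everything after that is deferred to a version of Videnskii's lemma that is not the lemma actually available (Lemma~\ref{LV}): that lemma contains no zero-placement construction and no ``explicit lower bound at $i$ in the form of a sum indexed by the zeros''; it states the comparison $F(x)\lesssim W(x)\lesssim x^2F(2x)$ for the majorant $F(x)=\sum_k x^{2k}/(2^kM_k)$ with $M_k=\sup_x x^{2k}/W(x)$. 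The paper's construction is different from what you sketch: one truncates $F$ at degree $2n$, setting $P_{2n}=\sum_{k\le n}b_kx^{2k}$, uses $b_k\le e^{2n}2^{-k}A_{2n}^{-2k}$ to show the tail is at most $1/2$ for $|x|\le A_{2n}/e$ (this, together with the argument $x/2$ in Videnskii's lower comparison, is the actual origin of $l_n=A_{2n}/(2e)$), takes the square-root polynomial $Q_n$ of degree $n$ with $|Q_n|^2=P_{2n}$ on $\mathbb R$ and no zeros in the upper half-plane, and computes $\log|Q_n(i)|$ by the Poisson formula, bounding the integrand from below on $[0,2l_n]$ via $P_{2n}\ge F/2\gtrsim x^{-2}W(x/2)$. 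The integral thus appears through the Poisson formula plus Videnskii's comparison, not through a Riemann-sum interpretation of a discrete sum over zeros; and the convexity of $\vphi(e^t)$ is consumed inside the proof of Lemma~\ref{LV} (supporting-line argument), not in a discretization step.

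The second genuine gap is your proposed use of the Tchebyshev inequality to push the envelope bound from $[-l_n,l_n]$ to all of $\mathbb R$, with the claim that $l_n=A_{2n}/(2e)$ is ``tuned'' for this. That step is never carried out, and in the naive form it fails: a degree-$n$ polynomial touching the envelope on $[-l_n,l_n]$ is only controlled outside by roughly $(2x/l_n)^n\max_{[-l_n,l_n]}|P|$, which at $x\approx A_{2n}$ contributes a factor about $(4e)^n\approx e^{2.4n}$, while $W(A_{2n})=e^{2n}$ and $W(l_n)$ may already be close to $e^{2n}$ (convexity of $\vphi$ in $\log$ does not prevent $\vphi$ from being nearly flat on $[l_n,A_{2n}]$); so the Tchebyshev majorant can exceed $\sqrt{x^2+1}\,W(x)$ in that range. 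The paper needs no such argument at all: since $P_{2n}$ is a truncation of a series with nonnegative coefficients, $P_{2n}\le F\lesssim W$ holds automatically on the whole line, and the normalized competitor $Q_n/Q_n(i)$ is admissible globally. In addition, note that obtaining the full Poisson value at $i$ requires choosing the square root with all zeros in the closed lower half-plane (for zeros in the upper half-plane one only gets an inequality in the wrong direction), a point your sketch does not address. As it stands, the proposal is a plausible plan whose two load-bearing steps (the construction with the lower bound at $i$, and the global envelope bound) are missing or would not go through as described.
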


\begin{lemma}[Videnskii, \cite{V}]\label{LV}
Let $\vphi (t)$  be a convex function of $\log t$. Set $$M_k=\sup_{x>0}\frac{x^{2k}}{W(x)}$$ and
 $$F(x):=\sum_{k=0}^\infty \frac{x^{2k}}{2^kM_k}.$$
Then $ F(x)\lesssim W(x)\lesssim x^2 F(2x),\;x\ge 1.$

\end{lemma}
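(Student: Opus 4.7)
The lower bound is essentially tautological, so I would dispense with it first. By the very definition of $M_k$, we have $x^{2k} \le M_k W(x)$ for every $k\ge 0$ and every $x>0$; summing the geometric series gives $F(x)\le W(x)\sum_{k\ge 0}2^{-k} = 2W(x)$ on all of $\mathbb R_+$. No convexity is needed here.

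The substantive content is the upper bound $W(x)\lesssim x^2 F(2x)$ on $[1,\infty)$. I would convert the hypothesis (``$\vphi$ is convex in $\log t$'') into a statement about the genuinely convex function $\psi(s):=\vphi(e^s)$. The key identity, obtained by writing $x=e^t$ inside the definition of $M_k$, is
$$\log M_k \;=\; \sup_t\bigl(2kt-\psi(t)\bigr)\;=\;\psi^{*}(2k),$$
i.e.\ $\log M_k$ is the Legendre transform $\psi^*$ sampled on the even integers. After setting $s=\log x\ge 0$, the desired inequality becomes
$$\psi(s)-2s \;\le\; \log F(2e^{s}) + O(1) \;=\; \log\sum_{k\ge 0}\exp\bigl(k\log 2+2ks-\psi^{*}(2k)\bigr)+O(1),$$
and since the series has positive terms it suffices to produce a single integer $k_{*}\ge 0$ with $\psi(s)-2s \le 2k_{*} s-\psi^{*}(2k_{*})$.

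The natural choice is to take $k_{*}=\lfloor \psi'(s)/2\rfloor$ (using one-sided derivatives or any subgradient when $\psi$ is not differentiable at $s$). Then $0\le \psi'(s)-2k_{*}<2$, and convex duality gives $\psi(s)=\psi'(s)\,s-\psi^{*}(\psi'(s))$. Since $\psi^{*}$ is nondecreasing on $[0,\infty)$ (because $\psi$ is nondecreasing), we obtain
$$\psi(s)-\bigl(2k_{*}s-\psi^{*}(2k_{*})\bigr) \;=\; (\psi'(s)-2k_{*})\,s \;-\;\bigl(\psi^{*}(\psi'(s))-\psi^{*}(2k_{*})\bigr)\;\le\;(\psi'(s)-2k_{*})\,s\;\le\;2s,$$
where the last inequality uses $s\ge 0$. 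Rearranging and exponentiating yields $W(x)/x^{2}\le F(2x)$, which is the sought bound (the borderline case $k_*=0$ reduces to $\psi(s)\le 2s-\psi^{*}(0)=2s+\inf_t\psi(t)$, and $F(2x)\ge 1/M_0 = \inf_t W$ absorbs the residual constant).

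The one step I expect to require a little care is the discretization of $\psi'(s)$ by the even integer $2k_{*}$: a priori it could cost much more than $2s$, and it is precisely the monotonicity of $\psi^{*}$ plus the restriction $s\ge 0$ (equivalently $x\ge 1$, as the statement assumes) that confines the error to $2s$ and matches the factor $x^{2}$ in the conclusion. Everything else is bookkeeping with the Legendre transform.
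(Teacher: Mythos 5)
Your strategy is the same as the paper's, only phrased in Legendre--transform language: the paper takes a supporting line of $\psi(t)=\vphi(e^t)$ at $t^*=\log x$ and rounds its slope $K$ down to the nearest even integer $2m$, paying exactly the factor $e^{2t^*}=x^2$; your choice $k_*=\lfloor\psi'(s)/2\rfloor$ together with the identity $\log M_k=\psi^*(2k)$ is that same step in dual form. Your lower bound $F\le 2W$ is correct and in fact cleaner than the paper's, which detours through $T(x)=\sup_k x^{2k}/M_k$ and supporting lines, although no convexity is needed for that direction there either.

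There is, however, one justification that is false as stated: $\psi$ nondecreasing does \emph{not} imply that $\psi^*$ is nondecreasing on $[0,\infty)$. Take $\vphi(x)=x$, i.e.\ $\psi(t)=e^t$: then $\psi^*(0)=0$ while $\psi^*(1)=-1$, and correspondingly $M_0=1>M_1=4e^{-2}$; the paper itself only asserts that $(M_k)$ increases for $k$ large. In general $\psi^*$ decreases on $(0,\psi'(0))$, where the maximizing $t$ is negative, so your inequality $\psi^*(2k_*)\le\psi^*(\psi'(s))$ can fail when $\psi'(s)$ (hence $2k_*$) is small, i.e.\ for $s$ in a bounded range. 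The damage is only an additive constant and is easy to repair: split the supremum defining $\psi^*(2k_*)$ at $t=0$. For $t\ge0$ one has $2k_*t-\psi(t)\le\psi'(s)t-\psi(t)\le\psi^*(\psi'(s))$, while for $t<0$ one has $2k_*t-\psi(t)\le0$ (here $\psi\ge0$ since $\vphi$ is positive); since $\psi^*(\psi'(s))\ge-\psi(0)$ (plug $t=0$), this gives $\psi^*(2k_*)\le\psi^*(\psi'(s))+\psi(0)=\psi^*(\psi'(s))+\vphi(1)$, and the extra constant is absorbed by $\lesssim$. The same remark fixes your borderline case, where the correct bound is $\psi(s)\le2s+\psi(0)$ rather than $2s+\inf\psi$ --- again only a constant. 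Note that the paper's own inequality $\log M_m\le B$ is exactly this comparison; it is legitimate there because the argument is restricted to sufficiently large $t^*$, where $B\ge0$. With this patch (or by treating a fixed compact range of $s$ trivially), your proof is complete and matches the paper's.
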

\begin{rem} {\rm
Note that this lemma  can also be derived from the results of \cite{AD}.}
\end{rem}

\subsubsection{\it Proof of the Videnskii lemma}

The sequence $\left(M_k\right)_k$  increases for $k$ sufficiently large. Set
$$
T(x):=\sup_{k\ge 0}\frac{x^{2k}}{M_k}, \qquad x>0.
$$
Since the function $t\mapsto\vphi(\exp t)$ is convex, the graph of the function
$$\log T(\exp t)=\sup_{k\ge 0}\left(2kt-\log M_k\right)$$
is an infinite polygon consisting of the supporting lines of $\vphi(\exp t)$ with even slopes. Therefore, we have $T(x)\le W(x)$ for large $|x|$.
Then $$F(x):=\sum _{k\ge 0}\frac{x^{2k}}{2^{k}M_k}\le \sum _{k\ge 0}\frac{T(x)}{2^{k}}\lesssim W(x),\qquad x\in\mathbb R.$$

Next, let $Kt-B$ be (some) supporting line of the graph of the convex function $\vphi(\exp t)$ at a sufficiently large point $t^*$:
$$\vphi(\exp (t))\ge Kt-B,\;\;\vphi(\exp (t^*))=Kt^*-B,\qquad K,B\in\mathbb R^+.$$
If the slope $K$ is even, then $B=\log M_{K/2}$ and $T(\exp (t^*))=W(\exp (t^*)).$ Otherwise, let $m$ be the integer part of $K/2$. Then  $\log M_m\le B$ and we have 
\begin{multline*}
\vphi(\exp (t^*))= Kt^*-B\le Kt^*-\log M_m \le (2m+2)t^*-\log M_m\le \log \left(T(\exp (t^*))\right)+2t^*,    
    \end{multline*}
that is 
$$W(x)\le  x^2T(x)$$
for  sufficiently large $x=\exp(t^*)$. Thus  we obtain
$$F(x)=\sum _{k\ge 0}\frac{x^{2k}}{2^{k}M_k}\ge\sum _{k\ge 0}\frac{x^{2k}}{2^{2k}M_k}\ge T\left(\frac x2\right)\gtrsim {x^{-2}W(\frac x2)},
$$
which  proves Lemma.
$\hfill\qed$

\subsubsection{\it Proof of the Mergelyan theorem.}

We set $\displaystyle M_k=\sup_{x>0}\frac{x^{2k}}{W(x)},$ and $b_k:=\displaystyle\frac 1{2^kM_k},$ so that $$F(x)=\sum _{k\ge 0}b_kx^{2k}.$$
Fix $n\ge \vphi(0)$.
By the definition of $M_k$ it is clear that 
$b_k\displaystyle\le \frac{W(x)}{2^kx^{2k}}$ for every $x>0$. Taking $x=A_{2n}$ we see that
$$b_k\le \frac{e^{2n}}{2^kA_{2n}^{2k}}.$$

 Consider the polynomial $\displaystyle P_{2n}:=\sum_{k=0}^{n} b_k x^{2k}$. For  $\displaystyle |x|\le \frac{A_{2n}}{e},$
 we have
 \begin{align*}
     0\le F(x)-P_{2n}(x) \le \sum_{k=n+1}^\infty b_k x^{2k}\le \sum_{k=n+1}^\infty\frac{e^{2n}}{2^kA_{2n}^{2k}}{x^{2k}}\le \sum_{k=n+1}^\infty\frac{e^{2n}}{2^ke^{2k}}\le 1/2.
 \end{align*}
 Since $M_0\le 1$, we have $b_0\ge 1$, $F(x)\ge 1$, $x\in \mathbb R$, and, hence, 
 \begin{align}\label{HP}
     P_{2n}(x) \ge F(x)/2, \qquad |x|\le \frac{A_{2n}}{e}.
 \end{align}

Let $Q_n$ be a polynomial of degree $n$, with no zeros in the upper half plane and such that $|Q^2_n(x)|=P_{2n}(x),\;\; x\in\mathbb R.$ Then
$$F(x)\ge |Q^2_n(x)|=P_{2n}(x)\ge 1.$$
By (\ref{HP}) we have
$$|Q_n^2(x)|=P_{2n}(x)\ge F(x)/2,\qquad |x|\le \frac{A_{2n}}{e}.$$
By the Poisson formula and the Videnskii lemma we obtain that ($\displaystyle l_n:=\frac{A_{2n}}{2e}$):
 \begin{align*}\log |Q_n^2(i)|=\frac 1 {\pi}\int_{-\infty}^{+\infty}\frac{\log |Q_n^2(x)|}{x^2+1}{\rm d}x\gtrsim\int_{0}^{2l_n}\frac{\log |Q_n^2(x)|}{x^2+1}{\rm d}x\\
 \gtrsim  \int_{0}^{2l_n}\frac{\log F(x)}{x^2+1}{\rm d}x +O(1)\gtrsim\int_{0}^{2l_n}\frac{\log W(\frac x2)-2\log x}{x^2+1}{\rm d}x+O(1)\\\gtrsim \int_{0}^{2l_n}\frac{\vphi(x/2)}{x^2+1}{\rm d}x+O(1)\gtrsim \int_{0}^{l_n}\frac{\vphi(x)}{x^2+1}{\rm d}x,\qquad n\to\infty.
 \end{align*}
 
 By the definition of $E_n(\infty, W)$ we see that
 \begin{align*}\log E_n({\infty, W})\le \log\left\|\frac{Q_n}{Q_n(i)}\right\|_{\infty,{W_1}}\le \log\|Q_n\|_{\infty,{W}}-\log{|Q_n(i)|}\\\lesssim \log\|F\|_{\infty,W}-\int_{0}^{l_n}\frac{\vphi(x)}{x^2+1}{\rm d}x\le -\int_{0}^{l_n}\frac{\vphi(x)}{x^2+1}{\rm d}x.
  \end{align*}
  which  proves the Mergelyan theorem.
  $\hfill\qed$

\section{Proof of the main theorem}
\subsection{An equivalent formulation of the main theorem}

We start with a reformulation of Theorem 1 which clarify a bit our estimates.

	\begin {theorem}\label{main2}
Under the hypothesis of  Theorem \ref{main}, for $1\le p\le\infty,$ we have
	\begin{itemize}
\item [(a)]
$\displaystyle\log E_n(p,W)\simeq 
-\frac{n}{\vphi^{-1}(n)}$
if $\vphi$ grows rapidly;
	\item [(b)]
	$\displaystyle\log E_n(p,W)\simeq -\displaystyle\int_{0}^{\vphi^{-1}(n)} \frac{\vphi(x){\rm d} x}{x^2+1}$
	if $\vphi$ grows normally.
\end{itemize}
	\end{theorem}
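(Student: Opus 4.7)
My plan is to prove Theorem~\ref{main2} by first establishing both directions in the uniform case $p=\infty$, and then transferring the bounds to $L^p_W$ for $1\le p<\infty$ via weight perturbation (for the upper bound) and a Nikolskii-type polynomial inequality (for the lower bound). The equivalence with Theorem~\ref{main} is routine: the substitution $y=1/x$ gives
$$\int_0^1\min(\vphi(1/x),n)\,dx = \frac{n}{A_n} + \int_1^{A_n}\frac{\vphi(y)}{y^2}\,dy,$$
and in case~(a) the first term dominates (since $\vphi(u)/u^{1+\eps}$ increasing gives $\int_1^{A_n}\vphi/y^2\lesssim n/A_n$), while in case~(b) the second does (since $\vphi(u)/u^2$ decreasing gives $\int_1^{A_n}\vphi/y^2\gtrsim n/A_n$).

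For the uniform upper bound, in case~(b) the function $\vphi$ is convex in $\log x$, so Mergelyan's Theorem~\ref{ML} applies directly and the cutoff $A_{2n}/(2e)$ may be replaced by $A_n$ since $\vphi(x)/x^2$ is decreasing. In case~(a), I would use the test polynomial $P(x)=T_n(x/A_n)/T_n(i/A_n)$: Lemma~\ref{Tcheb1} gives $|T_n(i/A_n)|\gtrsim e^{n/A_n}$ (with a minor parity adjustment for odd $n$), and Lemma~\ref{Tcheb2} together with $|T_n|\le 1$ on $[-A_n,A_n]$ yields $\sup_x|T_n(x/A_n)|/W_1(x)\lesssim 1$, hence $\|P\|_{\infty,W_1}\lesssim e^{-n/A_n}$. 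For the uniform lower bound, I would apply Poisson-Jensen in the upper half-plane to any $P\in\mathcal{P}_n$ with $P(i)=1$:
$$0=\log|P(i)|\le\frac{1}{\pi}\int_{\mathbb R}\frac{\log|P(x)|}{x^2+1}\,dx,$$
and split at $|x|=A_n$. Inside $[-A_n,A_n]$, the pointwise bound $|P|\le\|P\|_{\infty,W_1}W_1$ gives a contribution $a_n\log\|P\|_{\infty,W_1}+C\int_0^{A_n}\vphi/(x^2+1)\,dx+O(1)$ with $a_n=\frac{2}{\pi}\arctan A_n$; outside, the Tchebyshev inequality $|P|\le\|P\|_{\infty,[-A_n,A_n]}|T_n(\cdot/A_n)|$ together with $|T_n(x/A_n)|\le(2x/A_n)^n$ contributes $(1-a_n)\log\|P\|_{\infty,W_1}+O(n/A_n)$. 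Summing (so that $a_n+(1-a_n)=1$),
$$-\log\|P\|_{\infty,W_1}\lesssim\int_0^{A_n}\frac{\vphi(x)}{x^2+1}\,dx + \frac{n}{A_n},$$
and selecting the dominant term gives (a) or (b).

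For the passage to $L^p$, the upper bound follows from pointwise comparison: for any $\alpha<-1/p$, $\|f\|_{p,W_1}\le C_\alpha\|f\|_{\infty,W_{1+\alpha}}$ (since $\int(x^2+1)^{\alpha p/2}\,dx<\infty$), so $E_n(p,W)\le C_\alpha E_n(\infty,W_\alpha)$. Since $\vphi_\alpha-\vphi=\frac{\alpha}{2}\log(x^2+1)$ is merely logarithmic while $\vphi$ is super-logarithmic, $W_\alpha$ stays regularly growing in the same class and the asymptotic integral and $A_n$-value are unchanged. For the $L^p$ lower bound, I invoke a Nikolskii-type inequality $\|P\|_{\infty,W_1}\le Cn^{1/p}\|P\|_{p,W_1}$ for $P\in\mathcal{P}_n$, proven by locating the extremal point $x^*$ of $|P|/W_1$ and showing via Bernstein--Markov that $|P|/W_1$ is comparable to its maximum on an interval of length $\sim 1/n$ about $x^*$ (using that $\log W_1$ varies by $O(1)$ on such intervals for regular weights). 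This yields $E_n(\infty,W)\le Cn^{1/p}E_n(p,W)$ and hence $-\log E_n(p,W)\le -\log E_n(\infty,W)+(1/p)\log n$; the correction is absorbed because the leading bound grows strictly faster than $\log n$ for regular weights.

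The main obstacle will be verifying the Nikolskii-type inequality uniformly across the regular growth classes: the local variation of $W_1$ depends on $\vphi'$, and controlling the $1/n$-scale oscillation requires separate (but analogous) checks in the rapid and normal regimes. A secondary but routine matter is ensuring that the weight perturbations $W\mapsto W_\alpha$ preserve the hypotheses of Mergelyan's theorem in case~(b) and of Lemma~\ref{Tcheb2} in case~(a).
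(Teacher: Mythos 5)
Your uniform case ($p=\infty$) and your $L^p$ upper bound follow essentially the paper's own route (Tchebyshev test polynomial plus Mergelyan's theorem for the upper bound, Poisson/Jensen splitting at $A_n$ for the lower bound). The genuine gap is in your $L^p$ lower bound. You propose the global weighted Nikolskii inequality $\|P\|_{\infty,W_1}\le Cn^{1/p}\|P\|_{p,W_1}$ and then claim the additive correction $\tfrac1p\log n$ is absorbed because ``the leading bound grows strictly faster than $\log n$ for regular weights.'' That claim is false in the normally growing case: for the paper's own first example $\vphi(x)=x/\log(2+x)$ one has $A_n\simeq n\log n$, hence $n/A_n\to 0$ and $\int_0^{A_n}\vphi(x)(x^2+1)^{-1}\,{\rm d}x\simeq\log\log n$, so the bound you must prove is $-\log E_n(p,W)\lesssim\log\log n$, and an additive $\tfrac1p\log n$ destroys it. Moreover this cannot be patched by sharpening the constant: any $L^\infty$--$L^p$ comparison for degree-$n$ polynomials effectively supported on an interval of length $\sim A_n$ carries a factor of order $(n^2/A_n)^{1/p}$, whose logarithm is still $\asymp\log n$ in this example, so the transfer strategy is structurally too lossy whenever the main term is $o(\log n)$. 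The paper avoids this by running the splitting argument directly on the $L^p$-extremal polynomial: Jensen's inequality handles $[-A_n,A_n]$, and on the complement the Nikolskii-type bound of Lemma~\ref{X} ($\max_{[-a,a]}|Q|^p\lesssim (n^2/a)\int_{-a}^a|Q|^p$, from Markov's inequality) enters only multiplied by $\int_{A_n}^\infty(x^2+1)^{-1}{\rm d}x\simeq 1/A_n$, so the offending logarithm contributes only $O\bigl((\log n)/A_n\bigr)$. Your sketch of the Nikolskii inequality itself is also incomplete: the Markov scale on $[-A_n,A_n]$ is $A_n/n^2$, not $1/n$, and before speaking of ``$\log W_1$ varies by $O(1)$'' you would first have to localize the point where $|P|/W_1$ attains its supremum to $|x|\lesssim A_n$, which is a restricted-range statement you have not supplied.

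A smaller, fixable point concerns your $L^p$ upper bound: the claim that $W_\alpha$ with $\alpha<0$ ``stays regularly growing in the same class'' is not true in general --- subtracting $\tfrac{|\alpha|}2\log(x^2+1)$ destroys convexity of $\vphi$ in $\log x$ whenever $\vphi(e^t)$ has affine pieces, and monotonicity of $\vphi_\alpha(x)/x^2$ is not automatic either. The paper sidesteps this by the comparison $W_{-2}\gtrsim W^{1/2}$ and applying the already proved uniform result to the weight $\exp(\vphi/2)$, which is trivially in the same class; your argument can be repaired in exactly this way.
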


To show how Theorem \ref{main2} implies Theorem \ref{main}   we use the following lemma.
 
\begin{lemma}\label{slow&fast}
If a function $\vphi$ is  normally growing, then 
 $$
 \int_{0}^{a} \frac{\vphi(x){\rm d} x}{x^2+1}\gtrsim  \frac{\vphi(a)}{a},\qquad a\ge 1,
 $$
while for a rapidly growing function $\vphi$
we have \begin{align*}
%\label{FUB}
 \int_{0}^{a} \frac{\vphi(x){\rm d} x}{x^2+1}\lesssim  \frac{\vphi(a)}{a},\qquad a\ge 1.
\end{align*}
\end{lemma}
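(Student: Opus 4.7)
The plan is to handle the two cases separately, in each case exploiting the comparison of $\vphi$ with a suitable power of $x$ provided by the growth hypothesis. In both cases the estimate is straightforward for $a$ larger than some threshold depending on the constant $A$ from the definition, and the bounded range of smaller $a$ is handled by compactness, since both sides of the claimed inequality are positive continuous functions of $a$.

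For the rapidly growing case the key is that $\vphi(x)/x^{1+\eps}$ is increasing on $[A,\infty)$, so that for $x\in[A,a]$ one has $\vphi(x)\le (x/a)^{1+\eps}\vphi(a)$. Bounding $1/(x^2+1)\le 1/x^2$ then gives
\[
\int_A^a \frac{\vphi(x)}{x^2+1}\,dx \le \frac{\vphi(a)}{a^{1+\eps}}\int_A^a x^{\eps-1}\,dx \le \frac{1}{\eps}\cdot\frac{\vphi(a)}{a}.
\]
The remaining piece $\int_0^A \vphi(x)/(x^2+1)\,dx$ is a fixed constant which is absorbed, because rapid growth forces $\vphi(a)/a \ge (\vphi(A)/A^{1+\eps})\,a^\eps \to \infty$.

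For the normally growing case it suffices to lower-bound the integral over a single dyadic block $[a/2,a]$, assuming $a\ge 2A$. Since $\vphi(x)/x^2$ is nonincreasing on $[A,\infty)$, for $x\in[a/2,a]$ we have $\vphi(x)\ge (x/a)^2\vphi(a)\ge \vphi(a)/4$. Combined with $x^2+1\le 2a^2$ on this interval for $a\ge 1$, this yields
\[
\int_0^a \frac{\vphi(x)}{x^2+1}\,dx \ge \int_{a/2}^a \frac{\vphi(x)}{x^2+1}\,dx \ge \frac{\vphi(a)}{4}\cdot\frac{a/2}{2a^2} = \frac{\vphi(a)}{16\,a}.
\]

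I do not foresee a serious obstacle: the lemma reduces to a routine comparison with a power of $x$ dictated by the growth hypothesis (the convexity part of the normal-growth assumption plays no role here). The only mildly delicate point is ensuring the implicit constants survive the small-$a$ regime $a\in[1,2A]$, which follows from positivity and continuity of $\vphi$ by a compactness argument.
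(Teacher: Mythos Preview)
Your argument is correct and follows essentially the same route as the paper: in each case you use the monotonicity of $\vphi(x)/x^{1+\eps}$ or $\vphi(x)/x^{2}$ to compare $\vphi(x)$ with $\vphi(a)$, integrate the resulting power, and absorb the small-$a$ range by compactness. The only cosmetic difference is that for the normally growing case the paper integrates over $[A,a]$ and uses $\int_A^a \frac{x^2}{x^2+1}\,dx \gtrsim a$, whereas you restrict to the dyadic block $[a/2,a]$; both yield the bound $\gtrsim \vphi(a)/a$ in the same way.
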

\begin{proof}
If  the function $\vphi(x)/x^{2}$ decreases for $x\ge A$, then we have
\begin{align*}%\label{Slow}%
\int_{0}^{a} \frac{\vphi(x){\rm d} x}{x^2+1}\ge\int_{A}^{a} \frac{\vphi(x){\rm d} x}{x^2+1}\ge \frac{\vphi(a)}{a^{2}}
\int_{A}^{a} \frac{x^{2}{\rm d} x}{x^2+1}\gtrsim 
 \frac{\vphi(a)}{a},\qquad a\ge 2A.
 \end{align*} 
 If $\vphi(x)/x^{1+\eps}$ inreases for $x\ge A$ and for some $\eps>0$, then
\begin{align*}
\int_{0}^{a} \frac{\vphi(x){\rm d} x}{x^2+1}\lesssim \int_{A}^{a} \frac{\vphi(x){\rm d} x}{x^2+1}\lesssim \frac{\vphi(a)}{a^{1+\eps}}
\int_{A}^{a} \frac{x^{1+\eps}{\rm d} x}{x^2+1}\lesssim 
 \frac{\vphi(a)}{a},
 \qquad a\ge A.\end{align*}

\end{proof}

Combining Theorem 3 with Lemma 4 in both cases we get 
$$-\log E_n\simeq\int_{0}^{A_n} \frac{\vphi(x){\rm d} x}{x^2+1}+ \frac{\vphi(A_n)}{A_n}.$$
Changing the variables we obtain
\begin{multline*}-\log E_n\simeq\int_{0}^{A_n} \frac{\vphi(x){\rm d} x}{x^2+1}+ \frac{\vphi(A_n)}{A_n}\simeq\int_{1}^{A_n} \frac{\vphi(x){\rm d} x}{x^2}+ \frac{n}{A_n}=\int_{0}^{1} \max\left(\vphi		\left(\frac1x\right),\ n\right){\rm d }x,
\end{multline*}
which yields  the statement of Theorem \ref{main}.\hfill\qed

\subsection{Estimating  $\log E_n$ for the uniform norm}

\subsubsection{The lower bound}

Here we prove the following result.
\begin{lemma}\label{LBL} { Given a weight $W$  we have }
\begin{align*}%\label{LB}%
 \log{E_n}\gtrsim -\int_{0}^{A_n} \frac{\vphi(x)\rm d x}{x^2+1}
-\frac{n}{A_n},
\end{align*}
where $E_n=E_n(\infty,W).$
\end{lemma}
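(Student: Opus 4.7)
The plan is to use the reformulation $E_n(\infty,W) = \inf\{\|P_n/\sqrt{x^2+1}\|_{\infty, W} : P_n \in \mathcal P_n,\ P_n(i)=1\}$ given just before Theorem~\ref{main}, so that it suffices to show the following: whenever $P_n$ has degree $\le n$, $P_n(i)=1$, and $|P_n(t)| \le M\sqrt{1+t^2}\,W(t)$ on $\mathbb{R}$, then
$$-\log M \lesssim \int_0^{A_n} \frac{\vphi(x)}{x^2+1}\,dx + \frac{n}{A_n}.$$
The starting point is the same Poisson--Jensen inequality used in the proof of the Mergelyan theorem above: by dividing $P_n$ by the Blaschke product of its upper half-plane zeros and evaluating at $z=i$, one obtains
$$0 = \log|P_n(i)| \le \frac{1}{\pi}\int_{\mathbb{R}}\frac{\log|P_n(t)|}{1+t^2}\,dt.$$

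I would split this integral at $|t|=A_n$. On $[-A_n,A_n]$ the weight hypothesis gives $\log|P_n(t)|\le \log M + \tfrac12\log(1+t^2)+\vphi(|t|)$, contributing at most $(\log M)\tfrac{2\arctan A_n}{\pi} + O(1) + \tfrac{2}{\pi}\int_0^{A_n}\vphi(x)/(x^2+1)\,dx$. The analogous bound on $|t|>A_n$ is useless because $\int\vphi/(x^2+1)$ diverges by Hall's condition~(\ref{Hall}), so there one must instead exploit that $P_n$ has degree $n$. Since $W\le e^n$ on $[-A_n,A_n]$, we have $|P_n|\le M\sqrt{1+A_n^2}\,e^n$ on that interval, and applying the Tchebyshev inequality to the rescaled polynomial $y\mapsto P_n(A_ny)$, together with $|T_n(y)|\le(2|y|)^n$ for $|y|\ge 1$, yields
$$|P_n(t)| \le M\sqrt{1+A_n^2}\,e^n\,\Bigl(\frac{2|t|}{A_n}\Bigr)^n, \qquad |t|\ge A_n.$$
After the change of variables $u=t/A_n$, the contribution of this bound to the Poisson--Jensen integral is $O(n/A_n) + (\log M)\,O(1/A_n)$, where one uses $\log A_n = o(n)$ (which follows from condition~(\ref{Pol})) to absorb the $n\log A_n$ terms that appear along the way.

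Combining the two pieces,
$$0 \le (\log M)\Bigl(\tfrac{2\arctan A_n}{\pi} + O(1/A_n)\Bigr) + \frac{2}{\pi}\int_0^{A_n}\frac{\vphi(x)}{x^2+1}\,dx + O\Bigl(\frac{n}{A_n}\Bigr) + O(1),$$
and since the coefficient of $\log M$ is bounded below by a positive constant for large $n$, rearranging gives the claim. The main obstacle is purely technical: the Tchebyshev bound on the tail looks as though it introduces a dangerous $n\log A_n$ term, but this term is tamed after multiplication by the Poisson mass $\int_{A_n}^\infty dt/(1+t^2)\simeq 1/A_n$ of the tail, leaving only the desired $n/A_n$ contribution.
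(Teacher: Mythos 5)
Your proposal is correct and follows essentially the same route as the paper's proof: the Poisson--Jensen inequality at $i$ for a competitor polynomial with $P_n(i)=1$, a split of the integral at $|t|=A_n$, the weight bound on $[-A_n,A_n]$ contributing $\log M$ plus $\int_0^{A_n}\vphi(x)(x^2+1)^{-1}\,{\rm d}x$, and the Tchebyshev inequality with $|T_n(y)|\le(2|y|)^n$ on the tail, whose Poisson mass $\simeq 1/A_n$ produces the $n/A_n$ term. The only cosmetic differences are that the paper keeps the splitting point as a free parameter $a$ (set to $A_n$ only at the end) and phrases the bound through the extremal polynomial and $\sup_{[-a,a]}W_1$ rather than through $W\le e^n$ on $[-A_n,A_n]$.
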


\begin{proof}

Let $P_n$ be an extremal polynomial of degree $n$ such that $P_n(i)=1$ and $E_n=\|P_n\|_{\infty,W_1}$.
Then 
\begin{equation}\label{2}
    0\le\int_{\mathbb R} \frac{\log|P_n(x)|{\rm d} x}{x^2+1}= \left( \int_{|x|<a}+\int_{|x|\ge a}\right)\frac{\log|P_n(x)|{\rm d} x}{x^2+1}=:I_1+I_2,
\end{equation}
where $a>1$ is some parameter to be chosen later.

We start by estimating  the first integral:
\begin{multline}\displaystyle\label{1}
I_1=\int_{-a}^a \frac{\log|P_n(x)|}{x^2+1}{\rm d} x\\
 \le 
\int_{-a}^a\log\left|\frac{P_n(x)}{\sqrt{x^2+1}W(x)}\right| \frac{{\rm d} x}{x^2+1}
+\int_{0}^a \frac{\log{(x^2+1)}}{x^2+1}{\rm d} x
+
2\int_{0}^a \frac{\vphi(x)}{x^2+1}{\rm d} x\\
\lesssim\int_{-a}^a \log{\left\|{P_n(x)}\right\|_{\infty,W_1}}\frac{{\rm d} x}{x^2+1}+ 
\int_{0}^a \frac{\vphi(x)}{x^2+1}{\rm d} x
\lesssim \log{E_n}+ 
\int_{0}^a \frac{\vphi(x)}{x^2+1}{\rm d} x, 
\end{multline}
for $a$ large enough.

The next step is to estimate the second integral  in (\ref{2}) with the help of the  Tchebyshev inequality (Lemma \ref{T}):

\begin{align*}\displaystyle\label{2}
I_2
\le &
\int_{a}^\infty \frac{\log{|T_n(x/a)|}}{x^2+1}{\rm d} x+ 
\int_{a}^\infty  \frac {\log \left(\max_{[-a,a]}|P_n|\right)}{x^2+1}{\rm d} x
\\\lesssim & \int_{a}^\infty \frac{n\cdot\log(x/a)}{x^2+1}{\rm d} x
+
\int_{a}^\infty  \frac {\log \left(\max_{[-a,a]}|P_n|\right)}{x^2+1}{\rm d} x\\
\lesssim& 
\frac na + \int_{a}^\infty {\log \left\| {P_n}\right\|_{\infty, W_1} }\frac{{\rm d} x}{x^2+1}
+\int_{a}^\infty  \frac {\log \left(\sup_{[-a,a]} W_{1}\right)}{x^2+1}{\rm d} x\\
\lesssim &\frac na+\log E_n+\frac 1a{\log {\left( W_{1}(a)\right)}}
\lesssim \frac na+\log E_n+\frac{\vphi(a)}{a}.
\end{align*}

By (\ref{2}) and (\ref{1}) we conclude that
\begin{align*}
- \log{E_n} \lesssim\int_{0}^a \frac{\vphi(x)}{x^2+1}{\rm d }x
+\frac{n+\vphi(a)}a.
\end{align*}

Finally, let $a=A_n=\vphi^{-1}(n)$. Then
$$
- \log E_n \lesssim\int_{0}^{A_n} \frac{\vphi(x)}{x^2+1}{\rm d }x
+\frac{n}{A_n},
$$
which proves the lemma. 

%Then 
%\begin{align*}
% \log{\||Q|^2+\varepsilon\|}\gtrsim -\int_{0}^{a_n} \frac{\vphi(x)\rm d x}{x^2+1}
%-\frac{\vphi(a_n)}{a_n} \;\;\forall \varepsilon>0,\end{align*}

%and finally
%we get the (\ref{LB}).
\end{proof}

\subsubsection{The upper bound}

For normally growing $\vphi$ we just use  the Mergelyan Theorem.
 Indeed, 
 since the function $\vphi(x)/x^{2}$ decreases for $x$ large enough, we have
\begin{equation}\label{2e}
    \int_{0}^{A_{2n}/(2e)}  \frac{\vphi(x)\rm d x}{x^2+1}\gtrsim\int_{0}^{A_{n}}
\frac{\vphi(x){\rm d} x}{x^2+1}.
\end{equation}
 To obtain an upper bound in the case of  rapidly growing functions $\vphi$ we use the Tchebyshev polynomials. 
Taking into account Lemma \ref{Tcheb1},  Lemma \ref{Tcheb2}, and the fact that 
$$\sup_{0<x\le A_n}\left\{ \frac{|T_n(x/A_n)|}{W(x)}\right\}\le 1,$$
we   get by the definition of $E_n$:
 
\begin{multline*}
%\label{T_n}
E_n\le \frac{\|T_n(x/A_n)\|_{\infty,W_1}}{|T_n(i/A_n)|}\le\frac{\|T_n(x/A_n)\|_{\infty,W}}{|T_n(i/A_n)|}\\ \le
  \frac 1{|T_n(i/A_n)|}\max\left(1,\sup_{x>A_n}\left\{ \frac{|T_n(x/A_n)|}{W(x)}\right\}\right)\\
\lesssim \max
\left(1, \frac{2^n}{e^n}\right)\left(\frac 1{A_n}+1\right)^{-n}\le \left(\frac 1{A_n}+1\right)^{-n},
\end{multline*}
and finally
\begin{equation}\label{upper}\log E_n\lesssim -\frac n{A_n}.
\end{equation}
 
 \subsubsection{Conclusion}
 Estimates  (\ref{2e}), (\ref{upper}) together with Lemma \ref {slow&fast} and Lemma \ref{LBL} give Theorem \ref{main2} in the uniform case.

\subsection {Estimating  $\log E_n$ for the   weighted $L^p$ space, $1\le p<\infty$}

\subsubsection{The upper bound}

Given $1\le p<\infty, n\ge 0,$ we choose a polynomial $P_n$ of degree $n$ such that $$E_n(\infty,W_{-2})=\left\|\frac{P_n}{\sqrt{x^2+1}}\right\|_{\infty, W_{-2}}=\left\|P_n\right\|_{\infty,W_{-1}}$$ and $|P_n(i)|=1.$ Then
\begin{multline*}
E_n^p(p,W)\le \left\|\frac{ P_n}{\sqrt{x^2+1}}\right\|_{p,W}^p=\int_{\mathbb R} \left|\frac{P_n(x)}{W(x)\sqrt{x^2+1}}\right|^p{\rm d}x=\int_{\mathbb R} \left|\frac{ P_n(x)}{W_{-1}(x)}\right|^p\frac{{\rm d}x}{({x^2+1})^{ p}}\\\lesssim\sup_{x\in \mathbb R} \left|\frac{ P_n(x)}{W_{-1}(x)}\right|^p=  E^p_n(\infty, W_{-2}).
\end{multline*}

Since 
$W_{-2}\gtrsim W^{1/2}$, we have 
$$\log E_n(p,W)\lesssim \log E_n(\infty,  W^{1/2}).$$
Since the function $\vphi/2$ grows regularly, we can use Theorem \ref{main} for the case $p=\infty$ (which is already proved) to obtain that
$$\log E_n(p,W)\lesssim -\int_0^1 \min\left( \frac 12 \vphi\left(\frac 1x\right),n\right){\rm d}x\lesssim-\int_0^1 \frac 12\min\left(  \vphi\left(\frac 1x\right),n\right){\rm d}x,$$
which gives the upper estimate in Theorem \ref{main} for $1\le p <\infty$.

\subsubsection{The lower bound}

Now, let $P_n$ be the extremal   polynomial of degree $n$ for the $L^p$ norm,   such that $P_n(i)=1$ and $$E_n(p,W)=\|P_n\|_{p,W_1}.$$
Then 
\begin{equation}\label{4}
    0\le\int_{\mathbb R} \frac{\log|P_n(x)|{\rm d} x}{x^2+1}=\left(\int_{|x|<A_n}+\int_{|x|\ge A_n}\right)\frac{\log|P_n(x)|{\rm d} x}{x^2+1}=:I_1+I_2, 
\end{equation}
where $A_n=\vphi^{-1}(n)$ as before.

To estimate  the first integral we use the Jensen inequality:
\begin{multline}\displaystyle\label{5}
I_1= 
\int_{-A_n}^{A_n} \log{\displaystyle\left|\frac{P_n(x)}{W(x)\sqrt{x^2+1}}\right|^p}\frac{{\rm d} x}{x^2+1}
+2p\int_{0}^{A_n} \left(\vphi(x)+\log\sqrt{x^2+1}\right)\frac{{\rm d} x}{x^2+1}\\
\lesssim  \log \left(\int_{-A_n}^{A_n} \left|\frac{P_n(x)}{W(x)\sqrt{x^2+1}}\right|^p\frac{{\rm d} x}{x^2+1}\right)+ 
\int_{0}^{A_n} \frac{\vphi(x){\rm d} x}{x^2+1}\\
\lesssim \log{E_n(p,W)}+ 
\int_{0}^{A_n} \frac{\vphi(x){\rm d} x}{x^2+1}.
\end{multline}

To estimate $I_2$ we  use the following lemma. 

\begin{lemma}\label{X}
Let $Q$ be a polynomial  of degree $n$, $a>1$ and $p \ge 1.$ Then
$$\max_{[-a,a]} |Q|^p\lesssim \frac {n^2}a{\int_{-a}^a|Q|^p{\rm d}x}$$
\end{lemma}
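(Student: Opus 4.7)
The plan is to reduce the claim to the unit interval $[-1,1]$ by a linear change of variable and then to prove a Nikolskii-type inequality there using the classical Markov inequality. Set $R(y):=Q(ay)$ for $y\in[-1,1]$. Then $R$ is a polynomial of degree at most $n$,
$$\max_{[-1,1]}|R|=\max_{[-a,a]}|Q|,\qquad \int_{-1}^{1}|R(y)|^{p}\,\mathrm{d}y=\frac{1}{a}\int_{-a}^{a}|Q(x)|^{p}\,\mathrm{d}x,$$
so it suffices to establish that for every polynomial $R$ of degree at most $n$,
$$\max_{[-1,1]}|R|^{p}\lesssim n^{2}\int_{-1}^{1}|R(y)|^{p}\,\mathrm{d}y.$$

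Let $M:=\max_{[-1,1]}|R|$ and pick $y_{0}\in[-1,1]$ with $|R(y_{0})|=M$. The Markov inequality yields $|R'(y)|\le n^{2}M$ for $y\in[-1,1]$. Hence, for every $y\in[-1,1]$ with $|y-y_{0}|\le 1/(2n^{2})$,
$$|R(y)|\ge|R(y_{0})|-n^{2}M\cdot|y-y_{0}|\ge \frac{M}{2}.$$
The set of such $y$ inside $[-1,1]$ has length at least $1/(2n^{2})$ (since $y_{0}\in[-1,1]$), so
$$\int_{-1}^{1}|R(y)|^{p}\,\mathrm{d}y\ge \left(\frac{M}{2}\right)^{p}\cdot\frac{1}{2n^{2}},$$
which gives $M^{p}\le 2^{p+1}n^{2}\int_{-1}^{1}|R(y)|^{p}\,\mathrm{d}y$, with a constant depending only on $p$.

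Reversing the change of variable,
$$\max_{[-a,a]}|Q|^{p}=M^{p}\lesssim n^{2}\int_{-1}^{1}|R(y)|^{p}\,\mathrm{d}y=\frac{n^{2}}{a}\int_{-a}^{a}|Q(x)|^{p}\,\mathrm{d}x,$$
which is the desired inequality. There is no real obstacle: once one sees the correct scaling, the only ingredient is Markov's inequality to pass from a single pointwise maximum to a lower bound on a short subinterval, and the factor $n^{2}$ is exactly the Markov factor on $[-1,1]$.
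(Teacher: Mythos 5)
Your proof is correct and follows essentially the same route as the paper: both arguments use Markov's inequality to show that $|Q|$ stays at least half its maximum on a subinterval of length comparable to $a/n^2$ around the maximum point, then bound the integral from below; your only cosmetic difference is rescaling to $[-1,1]$ first rather than applying Markov's inequality on $[-a,a]$ directly.
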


\begin{proof}
Let $x_0\in J:=[-a,a]$ be such that $$\max_{J} |Q|=|Q(x_0)|.$$
Then for every  $ \displaystyle x\in J_1:=\left[x_0-\frac{a}{2n^2}, x_0+\frac{a}{2n^2}\right]\cap J$ there exists $\xi$ on the interval $J\cap J_1$ such that
$$|Q(x)-Q(x_0)|=|x-x_0|\cdot|Q'(\xi)|\le  \frac{a}{2n^2}|Q'(\xi)|
$$
Therefore, applying the classical Markov inequality (\cite{Markov}; \cite[Theorem 5.1.8]{Borwein})  on the interval $[-a,a]$, we obtain
$$|Q(x)-Q(x_0)|\le \frac{a}{2n^2}|Q'(\xi)|\le \frac{a}{2n^2}  \frac{n^2}{a}|Q(x_0)|
\le \displaystyle  \frac 12 |Q(x_0)|,$$
and, hence, we have
$$|Q(x)|\ge \displaystyle\frac 12|Q(x_0)|,\;\;|x-x_0|\le \frac a{2n^2}.$$

Thus,
$$\int_{-a}^a|Q|^p{\rm d}x\ge \int_{J\cap J_1}|Q|^p{\rm d}x\ge \frac{a}{4n^2} \frac 1{2^p}|Q(x_0)|^p,$$
proving the lemma.
\end{proof}

Finally, we turn to estimating $I_2$. By the Tchebyshev inequality,
\begin{align*}\displaystyle\label{2}
I_2\lesssim &\int_{A_n}^\infty \frac{\log{|T_n(x/A_n)|}{\rm d} x}{x^2+1}+\int_{A_n}^\infty  \log \left(\max_{[- A_n,A_n]}|P_n|\right)\frac {{\rm d}x}{x^2+1}
\\\lesssim &  \int_{A_n}^\infty \frac{n\cdot\log(x/A_n){\rm d} x}{x^2+1}
+\log \left(\max_{[-A_n,A_n]}|P_n|\right) \int_{A_n}^\infty  \frac {{\rm d}x}{x^2+1}.\end{align*}
By Lemma \ref{X}, we obtain that
\begin{align*}
I_2\lesssim &\frac n{A_n}+ 
\frac 1{A_n}\cdot\log\left(\frac {n^2}{A_n}{\int_{-A_n}^{A_n}|P_n|^p{\rm d}x}\right)\\
\lesssim&
\frac n{A_n}+ \frac {\log n^2}{A_n}
+\frac {1}{A_n}\log\left(\frac{W^p(A_n)({A_n^2+1})^{p/2}}{A_n}\int_{-A_n}^{A_n}\left|\frac{P_n(x)}{W(x)\sqrt{x^2+1}}\right|^p{{\rm d} x}\right)
\\\lesssim&
\frac n{A_n}+ 
\frac {p\log(A_n^2+1)-2\log A_n}{A_n}+
\frac 1{A_n}
\log\left(W^p(A_n)\int_{-A_n}^{A_n}\left|\frac{P_n(x)}{W(x)\sqrt{x^2+1}}\right|^p{{\rm d} x}\right)
\\\lesssim&
\frac n{A_n}+ \frac {\vphi(A_n)}{A_n}+
\frac 1 {A_n}\log E_n(p,W)\\
\lesssim &\frac n{A_n}+\log E_n(p,W),
\end{align*}
because $n=\vphi(A_n).$

Combining this estimate with (\ref{4}) and (\ref{5}) we get
\begin{align*}
- \log{E_n(p,W)} \lesssim\int_{0}^{A_n} \frac{\vphi(x){\rm d} x}{x^2+1}
+\frac{\vphi(A_n)}{A_n}
\end{align*}
 which completes the proof of  Theorem \ref{main2} and, hence, of Theorem \ref{main}, in the case $1~\le~p~<~\infty.$
\qed

\bigskip
\medskip

\noindent Department of Mathematics and Mechanics,
St. Petersburg State University, St. Petersburg, Russia
\newline {\tt a.kononova@spbu.ru}

\end{document}